\font\smallit=cmti10
\font\smalltt=cmtt10
\renewcommand\section{\@startsection {section}{1}{\z@}
{-30pt \@plus -1ex \@minus -.2ex}
{2.3ex \@plus.2ex}
{\normalfont\normalsize\bfseries\boldmath}}
\renewcommand\subsection{\@startsection{subsection}{2}{\z@}
{-3.25ex\@plus -1ex \@minus -.2ex}
{1.5ex \@plus .2ex}
{\normalfont\normalsize\bfseries\boldmath}}
\renewcommand{\@seccntformat}[1]{\csname the#1\endcsname. }
\newtheorem{theorem}{Theorem}
\newtheorem{lemma}{Lemma}
\newtheorem{corollary}{Corollary}
\theoremstyle{definition}
\newtheorem{definition}{Definition}
\newtheorem{remark}{Remark}
\def\multiset#1#2{\ensuremath{\left(\kern-.3em\left(\genfrac{}{}{0pt}{}{#1}{#2}\right)\kern-.3em\right)}}
\DeclareFontFamily{U}{wncy}{}
    \DeclareFontShape{U}{wncy}{m}{n}{<->wncyr10}{}
    \DeclareSymbolFont{mcy}{U}{wncy}{m}{n}
    \DeclareMathSymbol{\Sha}{\mathord}{mcy}{"58} 
\def \Q {{\mathbb Q}}
\def \F {{\mathbb F}}
\def \R {{\mathbb R}}
\def \Z {{\mathbb Z}}
\def \a {\alpha}
\def \d {\delta}
\def \g {\gamma}
\def \x {\xi}
\def \y {\eta}
\def \om {\omega}
\def \Om {\Omega}
\def \fp {{\mathfrak p}}
\def \im {\textup{im}}
\def \coker {\textup{coker}}
\def \phih {\hat{\phi}}
\def \psih {\hat{\psi}}
\def \Gal{\textup{Gal}}
\def \Sel {\textup{Sel}}
\def \Selphi {\Sel^{(\phi)}}
\def \Selpsi {\Sel^{(\psi)}}
\def \Selphih {\Sel^{(\phih)}}
\def \Selpsih {\Sel^{(\psih)}}
\def \WC{\textup{WC}}
\def \Qthree {\Q^\times/{\Q^\times}^3}
\def \Lthree {L^\times/{L^\times}^3}
\def \Lstar {L(S_{E,\psi},3)^*}
\begin{document}


\begin{center}
\uppercase{\bf A parametrized set of explicit elements of $\mathbf{\Sha(E/\mathbbb{Q})[3]}$}
\vskip 20pt
{\bf Steven R. Groen}\\
{\smallit Department of Mathematics, University of Warwick, Zeeman Building, Coventry, United Kingdom.}\\
{\tt steven.groen@warwick.ac.uk}\\ 
\vskip 10pt
{\bf Jaap Top}\\
{\smallit Bernoulli Institute, University of Groningen, Nijenborgh 9, Groningen, the Netherlands.}\\
{\tt j.top@rug.nl}\\
\end{center}
\vskip 20pt

\centerline{\smallit Received: 4/12/22, Revised: 9/5/22, Accepted: 11/20/22, Published: 12/16/22} 
\vskip 30pt 

\centerline{\bf Abstract}

\noindent
In this paper a set of elliptic curves $E$ with explicit elements of order $3$ in their Tate-Shafarevich group is constructed. First, the theory of descent by $3$-isogeny is reviewed, including explicit equations for homogeneous spaces representing the elements in the associated Selmer group. For the main result, elliptic curves admitting a rational $2$-isogeny as well as a rational $3$-isogeny are constructed. Using elementary $2$-isogeny descent, it is shown that our curves have rank zero. A result of Cassels then shows that the Selmer group of the $3$-isogeny is non-trivial. As a consequence one obtains in a very simple way explicit examples of plane cubics over $\Q$ that have a point everywhere locally, but not globally.

\pagestyle{myheadings}
\markright{\smalltt INTEGERS: 22 (2022)\hfill}
\thispagestyle{empty}
\baselineskip=12.875pt
\vskip 30pt



\section{Introduction}

In this article, we consider elliptic curves defined over $\Q$ that admit a rational isogeny of degree $3$. As is well known (see, for instance., \cite[Theorem~X.4.2]{BibSilv}) such an isogeny $\psi \colon E \to \bar{E}$ gives rise to an exact sequence of $\F_3$-vector spaces
\begin{equation*} \label{eqselsha1}
    0 \to \bar{E}(\Q)/\psi(E(\Q)) \to \Selpsi(E/\Q) \to \Sha(E/\Q)[\psi] \to 0 . 
\end{equation*}
The elements of $\Selpsi(E/\Q)$ are represented by genus-$1$ curves called \emph{homogeneous spaces}. As a preparation for our main result, section~\ref{sechomsp} presents explicit equations for these curves. The fact that they come from the Selmer group means they have points everywhere locally. A global point on such a curve gives rise to a point on $\bar{E}(\Q)/\psi(E(\Q))$. 
Section~\ref{sechomsp} is the ``{\sl Descent via Three-Isogeny}''
analogue of the classical ``{\sl Descent via
Two-Isogeny}''  described in \cite[Proposition ~X.4.9]{BibSilv}. Although most of this
is known to experts and parts of it can be found in the literature
(\cite{BibAoki}, \cite[Section~8.4]{Cohen}, \cite{BibCohPaz}, \cite{BibFish},
\cite{BibTop}), we are convinced our short and complete
exposition is useful.

In section~\ref{secfam} the main result of the paper is presented: it combines and compares the two simplest types of descent, namely a $2$-isogeny descent and a $3$-isogeny descent. We construct an explicit set of elliptic curves defined over $\Q$ (and admitting both a rational $2$-isogeny and a rational $3$-isogeny) with elements of order $3$ in their Tate-Shafarevich group. Concretely, we prove the following theorem.

\begin{theorem} \label{thmmain}
Let $h$ be a positive integer such that $h \equiv 3 \bmod 8$ and moreover $h$, $h-2$, $h-6$ and $h-8$ are prime numbers. Define
$$ E_h\colon y^2= x^3-216(x-h(h-6)^2)^2.$$
Then $E_h$ has the following properties:
\begin{enumerate}
    \item $E_h$ has a rational $2$-isogeny $\phi$ and a rational $3$-isogeny $\psi$ (see section~\ref{subsecEhdef}).
    \item $E_{h,tors}(\Q) \cong \Z/2\Z$ (see section~\ref{subsecEhdef}).
    \item Using $2$-isogeny descent one concludes that $E_h(\Q)$ has rank zero (see section~\ref{subsecEh2-desc}).
    \item $\#\Selpsi(E_h/\Q)=9$; this gives rise to $9$ pairwise distinct elements in the group $\Sha(E_h/\Q)[3]$ (see section~\ref{subsecEh3-desc}).
\end{enumerate}
\end{theorem}

By section~\ref{sechomsp}, this yields explicit cubics violating the Hasse principle. For example
\[
    C^h\colon 3w^2z+2z^3+w^3+6wz^2+2(h-2)^2(h-8)=12z^2-6w^2
\] 
 has a point over every completion of $\Q$, but not over $\Q$ itself.

Schinzel's hypothesis H (\cite{BibSchinzel}) 
predicts as a specific case that
\[
S=\left\{ h\in \Z \;:\; h\equiv 3\bmod 8\;\text{and}\; h,\;h-2,\;h-6,\;h-8\;\text{prime}\right\}
\]
is infinite and hence that this set yields infinitely many examples as above.

An infinite family of genus $1$ curves violating the Hasse principle and related to $\Sha[3]$ is already given in \cite{Poonen}, using the Brauer-Manin obstruction to show the non-existence of rational points (the earliest examples coming from $\Sha[3]$ seem to be those from Selmer's paper \cite{Selmer}). In \cite{bhargava2014positive} it is shown that in fact a positive proportion of plane cubics fail the Hasse principle. Furthermore, it is a classic result (\cite{BibCAss2}, \cite{BibAoki}, \cite{FisherPlatonic}) that the $3$-torsion in Tate-Shafarevich groups of elliptic curves over
$\Q$ gets arbitrarily large. In contrast with our result, apart from
\cite{Poonen} and \cite{bhargava2014positive} the papers
cited here discuss elliptic curves admitting
a rational point of order $3$. The method of comparing $2$- and $3$-isogeny descents we used
is very natural, yet we are not aware of an earlier text presenting examples in this way.  

\section{Homogeneous spaces corresponding to a rational \texorpdfstring{$3$}\     -isogeny} \label{sechomsp}

In order to construct explicit counterexamples to the Hasse principle in section~\ref{secfam}, this section reviews via Galois cohomology the descent coming from a rational $3$-isogeny $\psi$. This builds up to Theorem~\ref{thmhomspeq}, giving explicit equations for the homogeneous spaces in $\WC(E/\Q)$ coming from $\psi$. As already
stated, our aim is to provide
a description for $3$-isogeny descent comparable to what is done for $2$-isogenies in
\cite[Proposition ~X.4.9]{BibSilv}.

\subsection{Elliptic curves with a rational \texorpdfstring{$3$}\ -isogeny}\label{3isogenyequations}

Let $E$ be an elliptic curve over $\Q$ with a Galois invariant subgroup $T$ of size $3$. The quotient map $\psi\colon E \to E/T =: \bar{E}$ is a rational $3$-isogeny. By, for example, \cite[Proposition~ III.4.12]{BibSilv},
every rational $3$-isogeny between elliptic curves is obtained in
this way. 
Following \cite{BibTop}, one distinguishes the cases $j_E=0$ and $j_E \neq 0$ to obtain explicit descriptions for these curves and maps. 
The nice discussion of $3$-isogeny descent in \cite[Section~8.4]{Cohen}
avoids this distinction by using equations $y^2=x^3+d(ax+b)^2$ (but
different from our text, it does not discuss the Galois cohomological derivation of various
maps, nor the form of various homogeneous spaces).

In the case $j_E=0$, one gives $E$  by 
\begin{equation*} \label{eqEj0}
E\colon y^2=x^3+A
\end{equation*}
(here $T$ is generated by a point with $x=0$), and the isogeny $\psi$ up to possibly multiplication by $-1$ is 
\begin{align}
    \psi&: (x,y) \mapsto \left(\frac{y^2+3A}{x^2} \;, \;\frac{y(x^3-8A)}{x^3}  \right) , \label{eqpsiC} \nonumber \\
    \bar{E}&:\y^2=\xi^3-27A  . \nonumber
\end{align}
For brevity, we introduce the notation $\bar{A}:=-27A$.

In the case $j_E \neq 0$, one gives $E$ by
\begin{equation*} \label{eqEj}
E\colon y^2=x^3+A(x-B)^2. 
\end{equation*}
Again $T$ is generated by a point with $x=0$, and (up to $\pm 1$) here $\psi$  is given by
\begin{align}
    \psi&:(x,y) \mapsto \left(\frac{3(6y^2+6AB^2-3x^3-2Ax^2)}{x^2} \;, \; \frac{27y(8AB^2-x^3-4ABx)}{x^3} \right) , \label{eqpsiAB} \\
    \bar{E}&:\y^2= \x^3-27A(\xi-4A-27B)^2  . \nonumber 
\end{align} 
For brevity, we introduce $\bar{B}:=4A+27B$.

By $\psih\colon \bar{E} \to E$ we will denote the dual isogeny of $\psi$, such that $\psih \circ \psi = [3]$.

\subsection{Galois cohomology}

For any field $K$, let $G_K=\Gal(K^{\text{sep}}/K)$ be its absolute Galois group. The isogeny $\psi$ described above yields an exact sequence 
\begin{equation} \label{exseqpsi}
\begin{tikzcd}
0 \arrow[r] & {E[\psi]} \arrow[r] & E \arrow[r, "\psi"] & \bar{E} \arrow[r] & 0
\end{tikzcd}
\end{equation}
of $G_\Q$-modules. Taking $G_\Q$ invariants yields a long sequence of Galois cohomology
\[\begin{tikzcd} 0 \arrow[r] & E(\Q)[\psi] \arrow[r]& E(\Q) \arrow[r,"\psi"]\arrow[d,phantom, ""{coordinate, name=Z}]& \bar{E}(\Q) \arrow[dll,"\delta" description,rounded corners,to path={ --([xshift=2ex]\tikztostart.east)|- (Z)[near end]\tikztonodes-| ([xshift=-2ex]\tikztotarget.west)-- (\tikztotarget)}] \\ & H^1(G_\Q,E[\psi]) \arrow[r]& H^1(G_\Q,E) \arrow[r,"\psi_*"]& H^1(G_\Q,\bar{E}), \end{tikzcd}\]
which is shortened to 
\begin{equation} \label{eqpsigal} 
0 \to \bar{E}(\Q)/\psi(E(\Q))\stackrel{\delta}{\longrightarrow} {H^1(G_\Q,E[\psi])} \longrightarrow  {H^1(G_\Q,E)[\psi_*]} \to 0.
\end{equation}

Recall that a \emph{homogeneous space} (also called a torsor) of $E/\Q$ is a smooth curve $C/\Q$ with a simply transitive action $E \times C \to C$.  By \cite[Theorem~X.3.6]{BibSilv}, elements of $H^1(G_\Q,E)$ are in bijection with $\Q$-isomorphism classes of homogeneous spaces. Hence the set of $\Q$-isomorphism classes of homogenous spaces inherits the structure of a group, called the Weil-Ch\^{a}telet group $\WC(E/\Q) \cong H^1(G_\Q,E)$. In this section we explicitly describe the homogeneous spaces coming from a certain subgroup of
$H^1(G_\Q,E)$.

Localizing \eqref{eqpsigal} gives the commutative diagram with exact rows
{\small\begin{equation} \label{diagloc}
\begin{tikzcd}
\bar{E}(\Q)/\psi(E(\Q)) \arrow[d] \arrow[r,hook,"\delta"]       & {H^1(G_\Q,E[\psi])} \arrow[r, two heads] \arrow[d]   & {\WC(E/\Q)[\psi]} \arrow[d] \\
\prod\limits_{v \in M_\Q} \bar{E}(\Q_v)/\psi(E(\Q_v)) \arrow[r,hook, "\delta"] & \prod\limits_{v \in M_\Q} H^1(\Gal(\bar{\Q_v}/\Q_v),E[\psi]) \arrow[r, two heads] & \prod\limits_{v \in M_\Q} \WC(E/\Q_v) [\psi] .
\end{tikzcd}
\end{equation}
}

This allows one to define Selmer groups and Tate-Shafarevich groups, recalled here.

\begin{definition} \label{defsel}
The \emph{$\psi$-Selmer group} of $E$ over $\Q$, denoted $\Selpsi(E/\Q)$, is defined as the kernel of the composite map $H^1(G_\Q,E[\psi])\to {\prod_{v \in M_\Q} \WC(E/\Q_v)[\psi]}$ in the diagram \eqref{diagloc}.
\end{definition}

\begin{definition} \label{defsha}
The \emph{Tate-Shafarevich group} of $E$ over $\Q$, denoted $\Sha(E/\Q)$, is the kernel of the localization map  $\WC(E/\Q) \to \prod_{v \in M_\Q}  \WC(E/\Q_v).$
\end{definition}

From \eqref{diagloc} one infers the exact sequence 
\begin{equation*} \label{eqselsha2}
    0 \to \bar{E}(\Q)/\psi(E(\Q)) \to \Selpsi(E/\Q) \to \Sha(E/\Q)[\psi] \to 0 .
\end{equation*}

\subsection{Explicit equations for homogeneous spaces}

Define $L:=\Q[T]/(T^2-\bar{A})$ and denote $\bar{\tau}:=T\bmod(T^2-\bar{A})\in L$. The points in $\bar{E}[\hat{\psi}]$ have coordinates in any field obtained as the image
of $L$ under an algebra homomorphism. 

In case $L$ is a field, one
has $E[\psi] \cong \mu_3$ as $G_L$-modules. Since $\#E[\psi]^{G_L}$
divides $3$ and hence is coprime to $\#\Gal(L/\Q)$, all cohomology groups $H^k(\Gal(L/\Q),E[\psi]^{G_L})$ are trivial.
Therefore (using the inflation - restriction exact sequence,
as is done in \cite[Section~4]{BibTop}) the restriction $H^1(G_\Q,E[\psi]) \to H^1(G_L,E[\psi])^{\Gal(L/\Q)}$ is an isomorphism. Combining this with Hilbert's Theorem 90 and writing 
$\text{Ker}(N_{L/\Q}):=\text{Ker}\left(N_{L/\Q}\colon L^\times/{L^\times}^3\to\Q^\times/{\Q^\times}^3\right) $ gives
\[ H^1(G_\Q,E[\psi]) \cong H^1(G_L,E[\psi])^{\Gal(L/\Q)} \cong H^1(G_L, \mu_3)^{\Gal(L/\Q)} \cong \text{Ker}(N_{L/\Q}).\] 
Under this sequence of isomorphisms, \eqref{eqpsigal} becomes
\begin{equation} \label{eqpsigalL} 
    \begin{tikzcd}
0 \arrow[r] & \bar{E}(\Q)/\psi(E(\Q)) \arrow[r,"\delta"] & 
\text{Ker}(N_{L/\Q}) \arrow[r] & \WC(E/\Q)[\psi] \arrow[r] & 0.
\end{tikzcd}
\end{equation}

   In case $L$ is {\em not} a field one has $L\cong \Q\times \Q$.
   Under this identification the norm map $N_{L/\Q}\colon L^\times/{L^\times}^3\to\Q^\times/{\Q^\times}^3$ yields
   multiplication $\Q^\times/{\Q^\times}^3\times\Q^\times/{\Q^\times}^3\to\Q^\times/{\Q^\times}^3$. In particular, its kernel
   $\text{Ker}(N_{L/\Q})$ is isomorphic to $\Q^\times/{\Q^\times}^3$.
   Since in the present situation $E[\psi]\cong \mu_3$ as $G_\Q$-modules, one obtains in an analogous but simpler way as
   above again $H^1(G_\Q,E[\psi]) \cong \text{Ker}(N_{L/\Q})$
   and therefore also the sequence \eqref{eqpsigalL}.
   
   The following lemma describes the connecting homomorphism $\delta$
   in terms of the equations presented in Section~\ref{3isogenyequations}.
\begin{lemma} \label{lemdel}
Let $E$ be given by $y^2=x^3+A$. The map $\delta\colon \bar{E}(\Q)/\psi(E(\Q)) \to \Lthree$ is given by
\[
    \delta\colon 
    P+\psi(E(\Q)) \mapsto \left\{\begin{array}{ll}
    1 \cdot {L^\times}^3 & \text{if}\;\;P=O,\\
        \left(\y+{\bar{\tau}}\right) \cdot {L^\times}^3 & \text{if}\;\;P=(\x,\y)\;\;\text{with}\;\;\x\neq 0,\\
     (2A^2,4A)^{\pm 1}\in\left(\Qthree\right)^{\oplus 2} &\text{if}\;\;P=(0,\pm \sqrt{\bar{A}}).
    \end{array}\right.
\]

For $E$ given by $y^2=x^3+A(x-B)^2$,  the connecting homomorphism takes the form
\[
    \delta\colon P+\psi(E(\Q)) \mapsto \left\{\begin{array}{ll}
    1 \cdot {L^\times}^3 & \text{if}\;\;P=O,\\
    (\y+(\x-\bar{B})\bar{\tau}) \cdot  {L^\times}^3 &
    \text{if}\;\;P=(\x,\y)\;\;\text{with}\;\;\x\neq 0,\\
    (2\bar{B}^2A, 4\bar{B}A^2)\in\left(\Qthree\right)^{\oplus 2} &\text{if}\;\;P=(0,\pm \sqrt{\bar{A}}\cdot\bar{B}).
    \end{array}\right.
\]
\end{lemma}

\begin{proof}
The map $\delta\colon \bar{E}(\Q)/\psi(\Q)\to H^1(G_\Q,E[\psi])$ sends the class of $P$ in the group $\bar{E}(\Q)/\psi(E(\Q))$ to the cocycle $\sigma \mapsto Q^\sigma -Q$, where $\psi(Q)=P$. One finishes the proof by a direct computation using the definition of $\psi$ and Hilbert's Theorem 90. See also \cite[Exercise 10.1]{BibSilv} or \cite[\S8.4.4]{Cohen}.
\end{proof}

In case $L$ is a field, write $M_L$ for the set of primes (finite and infinite) of $L$. Let $S\subset M_L$ denote a finite set containing all infinite primes of $L$.
We define
\begin{align*}
    L(S,3) &:= \{t \in L^\times/L^{\times 3} \; : \; v(t) \equiv 0 \bmod 3\; \forall v \in M_L\setminus S\} \\
    L(S,3)^* &:= \{t \in L(S,3) \; : \;  N_{L/\Q}(t) \in {\Q^\times}^3\} . 
\end{align*}
Let $S_{E,\psi}\subset M_L$ denote the set of bad primes of $E$, together with the primes dividing $3$ (which is the degree of $\psi$), and the infinite primes of $L$. It is well-known (see, for instance, \cite[Corollary~X.4.4]{BibSilv}) that $\im(\delta)$ is contained in $L(S_{E,\psi},3)$. This implies that $\im(\delta)$ is finite, which is essentially the weak Mordell-Weil theorem. Note that it is immediate from the equations (without using Galois cohomology) that $\im(\delta)$ is contained in the kernel of the norm map.

To adapt the above to the situation where $L$ is not a field (so
$\bar{A}$ is a non-zero square), one considers in that case any of the two projections
$\pi\colon \Lthree\to\Qthree$ and defines, for $S\subset M_\Q$
containing the infinite prime of $\Q$, the group
\[
L(S,3) := \{t \in L^\times/L^{\times 3} \; : \; v(\pi(t)) \equiv 0 \bmod 3\; \forall v \in M_\Q\setminus S\}.
    \]
    The subgroup $L(S,3)^*$ is defined exactly as before.
    With $S_{E,\psi}\subset M_\Q$ the set of bad
    primes for $E$ together with $3$ and $\infty$,
    again the image of $\delta$ is contained in $L(S_{E,\psi},3)^*$.

\begin{lemma} \label{lemcube}
Suppose $L$ is a field and $t \in L(S,3)^*$. If $v\in S$ is a
finite place with the property that $v(t)\bmod 3\not\equiv 0$, then
$v$ is either a split prime of $L/\Q$ or a non-principal ramified prime.
\end{lemma}
\begin{proof}
Suppose $v\in S$ corresponds to an inert or a principal ramified prime, which we write as 
$\fp=\a \mathcal{O}_L$. Then $N_{L/\Q}(\a)\in\{\pm p, \pm p^2\}$ for a prime number $p$. If $v(t)\bmod 3\not\equiv 0$,
then any $\tau\in L^\times$ representing $t$ has a norm containing
the prime $p$ to a power that is not $0\bmod 3$. This violates
the definition of $L(S,3)^*$.
\end{proof}

We now give explicit equations for the homogeneous space corresponding to such $t$.

\begin{theorem} \label{thmhomspeq}
Let $t\in L$ represent a class in $\Lstar$ and let $s\in\Q$ be the
unique rational number with $N_{L/\Q}(t)=s^3$. The map $\Lstar \to \WC(E/\Q)[\psi]$ in \eqref{eqpsigalL} sends $t\cdot {L^\times}^3$  to a homogeneous space $C_t$. An explicit affine equation for $C_t$ in variables $w$ and $z$ is as follows:
\begin{itemize}
\item If $E$ is given by $y^2=x^3+A$ and $\bar{A}$ is a square, then $C_t\colon t^2w^3-2t\sqrt{\bar{A}}=z^3$.
\item If $E$ if given by $y^2=x^3+A$ and $\bar{A}$ is not a square, write $t=u+v\sqrt{\bar{A}}$. 
Then we have $$ C_t\colon 3uw^2z+\bar{A}uz^3+vw^3+3\bar{A}=1. $$
\item For $E\colon y^2=x^3+A(x-B)^2$ and $\bar{A}$ a square, we have $C_t\colon t^2w^3-2t(wz-\bar{B})\sqrt{\bar{A}}=z^3$.
\item For $E\colon y^2=x^3+A(x-B)^2$ and $\bar{A}$ not a square, write $t=u+v\sqrt{\bar{A}}$. Then
$$C_t\colon 3uw^2z+\bar{A}uz^3+vw^3+3\bar{A}vwz^2+\bar{B}=s(w^2-\bar{A}z^2).$$
\end{itemize}
\end{theorem}

\begin{proof}
Assume that $t$ comes from  $P=(\x,\y)$ on $\bar{E}(\Q)/\psi(E(\Q))$, so $t$ equals the image of $\delta$ in Lemma~\ref{lemdel} up to multiplication by a cube. For every case this yields a model of $C_t$.

In the first case, $t \in \im (\delta)$ implies $w^3 t = \y+\sqrt{\bar{A}}$ for some $w \in \Q$. The condition $P\in\bar{E}(\Q)$ yields that $(w^3t-\sqrt{\bar{A}})^2=\y^2=\x^3+\bar{A}$. Setting $z=\frac{\x}{w}$ gives the equation of $C_t$.

The second case is similar, starting from 
$(w+z\sqrt{\bar{A}})^3 (u+v\sqrt{\bar{A}}) = \y+\sqrt{\bar{A}}$. Comparing coefficients of $\sqrt{\bar{A}}$ yields the equation of $C_t$. 
Here $P$ is automatically on the curve by letting $\x^3$ be the norm of the left-hand side (which is a cube).

For the third case, $t \in \im (\delta)$ implies $w^3t = \y+\sqrt{\bar{A}}(\x-\bar{B})$. Just like in the first case, the condition $P\in\bar{E}(\Q)$ gives the equation for $C_t$.

In the fourth case, we have
 $(w+z\sqrt{\bar{A}})^3 (u+v\sqrt{\bar{A}}) = \y+\sqrt{\bar{A}}(\x-\bar{B})$.
Comparing coefficients of $\sqrt{\bar{A}}$ yields 
$3uw^2z+\bar{A}uz^3+vw^3+3\bar{A}vwz^2+B= \x$.
Now, observe that 
\[
\x^3= N_{L/\Q}(\y+\sqrt{\bar{A}}(\x-\bar{B})) 
=N_{L/\Q}((w+z\sqrt{\bar{A}})^3 (u+v\sqrt{\bar{A}}))=s^3(w^2-\bar{A}z^2)^3. 
\]
Substituting $\x=s(w^2-\bar{A}z^2)$ gives the equation for $C_t$.
\end{proof}

\begin{remark}
Theorem 3.1 and Theorem 4.1 of \cite{BibCohPaz} also describe the homogeneous spaces of a rational $3$-isogeny. Like in \cite{Cohen}, the slightly different model $\bar{E}\colon y^2=x^3+D(ax+b)^2$ is used. Theorem 3.1 treats  $\sqrt{D}\in\Q$, resulting in
$u X^3+ \left(\frac{1}{u}\right) Y^3 +2bZ^3=2aXYZ$.
This is equivalent to our first case (if $a=0$) and third case (if $a \neq 0$), via the change of variables $(X,Y,Z):=(w,-z,1)$. 
Theorem~4.1 of \cite{BibCohPaz} treats the case $\sqrt{D}\not\in \Q$, corresponding to the second case (if $a=0)$ and fourth case (if $a \neq 0$) in Theorem \ref{thmhomspeq}. Introducing an extra variable for the cube root of $N_{L/\Q}(t)$ is avoided by writing $\Gal(L/\Q)=\langle \sigma \rangle$ and $t=v^2\sigma(v)$ (which is allowed because $t$ is chosen up to cubes), such that $N_{L/\Q}(t)=(v\sigma(v))^3$. This yields a different, but equivalent model of the corresponding homogeneous space.
\end{remark}

\section{A set of explicit elements of order \texorpdfstring{$3$}\ \  in Tate-Shafarevich groups} \label{secfam}

In this section we exhibit a set $\{E_h\}$ of elliptic curves with a rational $2$-isogeny $\phi$ and a rational $3$-isogeny $\psi$. By the well-known method of $2$-isogeny descent we show that every $E_h$ has rank zero. Then we show that the Selmer group $\Selpsi(E_h/\Q)$ must contain at least $9$ elements. 
The non-trivial ones are represented by homogeneous spaces (in fact plane cubics whose model is given by Theorem~\ref{thmhomspeq}), that violate the Hasse principle. We guarantee the existence of elements in the $\psi$-Selmer group using the following result of Cassels (\cite[Thm.~1.1]{BibCass}, \cite[Thm.~1]{KloosSchaef}, \cite[\S~7]{DeLong}).

\begin{theorem} \label{thmCass}
Let $\phi :E/\Q \to E'/\Q$ be a rational isogeny and let $\phih:E'/\Q \to E/\Q$ be its dual.
Then the following relation between the corresponding Selmer groups holds (\cite[Theorem~1.1]{BibCass}):
$$\frac{\# \Selphih(E'/\Q)}{\# \Selphi (E/\Q)} = \frac{\# E'(\Q)[\phih] \Om_E \prod_{p} c_{E,p} } { \# E(\Q)[\phi] \Om_{E'} \prod_{p} c_{E',p} },$$
where $\Om_E:= \int_{E(\R)} \om_E$ denotes the real period of $E$ and $c_p:=\#E(\Q_p)/E_0(\Q_p)$ denotes the Tamagawa number of $E$ at $p$. 
\end{theorem}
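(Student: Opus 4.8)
The plan is to reduce the identity to a product of purely local terms by global duality, and then to evaluate those local terms one place at a time. First I would record that $E[\psi]$ and $\bar{E}[\psih]$ are Cartier-dual $G_\Q$-modules: since $\psih$ is the dual isogeny of $\psi$, the Weil pairing identifies $\bar{E}[\psih]\cong \mathrm{Hom}(E[\psi],\mu_3)$. Under this duality, $\Selpsi(E/\Q)$ and $\Selpsih(\bar{E}/\Q)$ are exactly the Selmer groups attached to these two dual modules, cut out inside $H^1(G_\Q,E[\psi])$ and $H^1(G_\Q,\bar{E}[\psih])$ by the families of local conditions $\{\im \delta_v\}_v$ obtained by localizing the connecting maps of Lemma \ref{lemdel} as in the diagram \eqref{diagloc}.

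The crucial input is local Tate duality: at each place $v$ the local pairing puts $H^1(G_{\Q_v},E[\psi])$ and $H^1(G_{\Q_v},\bar{E}[\psih])$ in perfect duality, and the two local images are exact orthogonal complements of one another. This is what makes the two Selmer structures dual in the sense of the Greenberg--Wiles formula, itself a consequence of the nine-term Poitou--Tate exact sequence. Applying that formula, and using $\#\im\delta_v = \#\bigl(\bar{E}(\Q_v)/\psi(E(\Q_v))\bigr)$ together with $H^0(G_{\Q_v},E[\psi])=E(\Q_v)[\psi]$, yields
$$\frac{\#\Selpsih(\bar{E}/\Q)}{\#\Selpsi(E/\Q)} = \frac{\#\bar{E}(\Q)[\psih]}{\#E(\Q)[\psi]}\prod_{v}\frac{\#E(\Q_v)[\psi]}{\#\bigl(\bar{E}(\Q_v)/\psi(E(\Q_v))\bigr)}.$$
Comparing with the claimed identity, it then remains to prove the local product formula
$$\prod_v \frac{\#\bigl(\bar{E}(\Q_v)/\psi(E(\Q_v))\bigr)}{\#E(\Q_v)[\psi]} = \frac{\Om_{\bar{E}}\prod_p c_{\bar{E},p}}{\Om_E \prod_p c_{E,p}}.$$

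To evaluate each factor I would apply the change-of-variables formula for Haar measure to the local homomorphism $\psi\colon E(\Q_v)\to\bar{E}(\Q_v)$. Fixing global invariant differentials with $\psi^*\om_{\bar{E}}=\lambda\,\om_E$ for some $\lambda\in\Q^\times$, this gives
$$\frac{\#\bigl(\bar{E}(\Q_v)/\psi(E(\Q_v))\bigr)}{\#E(\Q_v)[\psi]} = \frac{1}{|\lambda|_v}\cdot\frac{\mu_v(\bar{E}(\Q_v))}{\mu_v(E(\Q_v))},$$
where $\mu_v$ is the measure of the corresponding differential. At $v=\infty$ the total volumes are precisely the real periods $\Om_E$ and $\Om_{\bar{E}}$. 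At a finite $p$, Tate's volume formula relates $\mu_p(E(\Q_p))$ to the Tamagawa number $c_{E,p}$ and to $\#\widetilde{E}_{\mathrm{ns}}(\F_p)$; at primes of good reduction away from $3$ the whole factor is $1$, since isogenous elliptic curves have equally many points over $\F_p$ and both Tamagawa numbers are $1$, so only finitely many places contribute.

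Finally I would assemble the finitely many nontrivial factors. The product formula $\prod_v|\lambda|_v=1$ cancels the differential-scaling constant, and the same device absorbs the discrepancy between the chosen $\om_E$ and the minimal differentials at each $p$; what survives is exactly the period ratio from $\infty$ together with the ratio of Tamagawa numbers over the finite primes, which is the desired right-hand side. I expect the main obstacle to lie in the bookkeeping of the previous paragraph: reconciling the global differentials with the Néron-minimal ones place by place, correctly tracking the point-count factors $\#\widetilde{E}_{\mathrm{ns}}(\F_p)$ at the bad primes and at $p=3$, and handling the connected-component subtleties of $E(\R)$, so that every spurious factor cancels through the product formula and only the stated periods and Tamagawa numbers remain.
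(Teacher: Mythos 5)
The paper offers no proof of Theorem \ref{thmCass} at all: it is quoted directly from Cassels \cite[Theorem~1.1]{BibCass}, so there is no internal argument to compare yours against. What you have written is a correct outline of the standard proof of Cassels' formula: the Greenberg--Wiles form of the Poitou--Tate sequence, applied to the Cartier-dual pair $E[\psi]$ and $\bar{E}[\psih]$ with local conditions $\im\delta_v$, reduces the Selmer ratio to the product of the local terms $\#\bigl(\bar{E}(\Q_v)/\psi(E(\Q_v))\bigr)/\#E(\Q_v)[\psi]$, and these are then evaluated by the Haar-measure change-of-variables identity, Tate's local volume formula, and the product formula. Both of your displayed identities point in the right direction, and your local-factor formula $\#\coker/\#\textup{ker}=|\lambda|_v^{-1}\,\mu_v(\bar{E}(\Q_v))/\mu_v(E(\Q_v))$ is correct as stated. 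This is in substance Cassels' own argument (local duality plus a product-formula volume computation), repackaged in modern language; note also that the local half of your computation is exactly what the paper later imports as \cite[Lemma~7.4]{BibDokLoc} to evaluate $\Om_{E_h}/\Om_{\bar{E}_h}$ in the proof of Theorem \ref{thmshaEh}, so your route and the paper's toolkit are closely aligned.

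If you were to write this out in full, the points needing care are the ones you partly flag. First, the deepest input is not the perfectness of the local Tate pairing but the assertion that $\im\delta_{\psi,v}$ and $\im\delta_{\psih,v}$ are exact annihilators of one another; this is a theorem of Tate and must be cited or proved, and you must also verify that $\im\delta_v$ coincides with the unramified condition at almost all places, since Greenberg--Wiles requires this. Second, the theorem as stated concerns an arbitrary rational isogeny, so the Weil-pairing identification should read $\bar{E}[\psih]\cong\mathrm{Hom}(E[\psi],\mu_{\deg\psi})$ rather than $\mu_3$. Third, at a finite $p$ the precise statement you need is $\mu_p(E(\Q_p))=c_{E,p}\cdot\#\widetilde{E}_{\mathrm{ns}}(\F_p)\cdot p^{-1}$ for the minimal differential, together with the isogeny-invariance of $\#\widetilde{E}_{\mathrm{ns}}(\F_p)$ (reduction type, and split versus non-split, are isogeny invariants), which makes all factors equal to $1$ outside the bad primes, the primes dividing $\deg\psi$, and $\infty$; with minimal differentials the scalar $\lambda$ in $\psi^*\om_{\bar{E}}=\lambda\om_E$ is a $p$-adic unit at good primes away from $\deg\psi$, and the global product formula $\prod_v|\lambda|_v=1$ then performs exactly the cancellation you describe. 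None of these is a wrong step --- they are the bookkeeping you yourself identify --- so your sketch is a sound, essentially complete blueprint for the quoted result.
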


\subsection{The elliptic curves \texorpdfstring{$E_h$}{Eh}} \label{subsecEhdef}

Recall the general form of an elliptic curve with a rational $3$-isogeny and non-zero $j$-invariant:
$$ E\colon y^2=x^3+A(x-B)^2.$$ We wish to use descent by $2$-isogeny to show that $E$ has rank zero, so we want to impose on the parameters $A$ and $B$ the condition that $E$ admits a rational $2$-isogeny. This is equivalent to the right-hand side polynomial having a rational root $x=c$. This implies $A=-\frac{c^3}{(c-B)^2}$. Multiplying by $(c-B)^6$, rescaling $x$ and $y$ and introducing $h:=B$ yields
$$ E\colon y^2 = x^3-c^3(x-h(h-c)^2)^2. $$
We now make a strategic choice of $c$; we will later do arithmetic in $\Q(\bar{E}[\psih])=\Q(\sqrt{3c})$, so we pick $c=6$ to land in the convenient number field $\Q(\sqrt{2})$. This gives the model of the curve we will use:
\begin{equation} \label{eqEh}
E_h \colon y^2= x^3-216(x-h(h-6)^2)^2.
\end{equation}
We denote the rational $2$-isogeny by $\phi\colon E_h \to E_h'$, and the rational $3$-isogeny by $\psi\colon E_h \to \bar{E}_h$.

\begin{remark}
Viewing $h$ as a variable, one can view $E_h/\Q(h)$ as the generic
fiber of an elliptic surface $\mathcal{E}\to\mathbb{P}^1$ defined over
$\Q$. This turns out to be a quadratic twist of Beauville's
rational modular elliptic surface associated to the congruence
subgroup $\Gamma_0(6)$. Indeed, Beauville in \cite{Beauville} describes this
surface as the pencil of plane cubics given by
\[(x+y)(x+z)(y+z)+txyz=0.\] Using for instance the basepoint/section $(0:0:1)$ on
the curves in this pencil, one readily obtains the Weierstrass equation
\[
y^2=x^3+((t+2)x+4t)^2
\]
for the latter surface. The M\"{o}bius transformation
$h\mapsto  t=-2h/(h-6)$ transforms this, after scaling,
into
\[
y^2=x^3+(6x+h(h-6)^2)^2
\]
which is the quadratic twist over $\Q(h)(\sqrt{-6})/\Q(h)$ of 
$E_h$.

As a consequence (see also \cite[\S~2.3.2]{TopYui} and No.~66 in
the table of \cite{OguisoShioda}), as an
elliptic curve over the function field $\Q(h)$ one finds
$E_h(\Q(h))\cong \Z/2\Z$ and $E_h(\Q(h)(\sqrt{-6}))\cong\Z/6\Z$.
In \cite[p.~167 Table~8.3]{SchuttShioda} another equation
defining Beauville's example is presented.
\end{remark}

We now consider values $h\in\Z$ and take $E=E_h/\Q$. In order to maintain some control over the local computations, we minimize the number of bad primes of $E$; we make sure that the discriminant
$$ \Delta_E =  -2^{10}  3^9 h^3 (h- 2)^2 (h - 6)^6 (h - 8) $$
has only $6$ bad primes, by demanding that $h$, $h-2$, $h-6$ and $h-8$ are prime numbers. Note that this implies $h \equiv 4 \bmod 15$. 

From reduction modulo $5$, which we have rigged to be a good prime, and the fact that $-6$ and $2$ are not squares (meaning the $3$-torsion is not rational), it follows that the rational torsion subgroups of $E_h$, $E'_h$ and $\bar{E}_h$ are isomorphic to $\Z/2\Z$, as claimed in Theorem~\ref{thmmain}.

In view of Theorem~\ref{thmCass}, we wish to manipulate the Tamagawa numbers such that $ \prod_{p} c_{E,p} <  \prod_{p} c_{\bar{E},p}$ and hence $\Selpsi(E/\Q)$ grows. Using the algorithm in \cite{BibTateAlg} one obtains the results described in Table~\ref{tabloc}.

\begin{table}[ht]
\centering
\begin{tabular}{|l|l|l|l@{}|l@{}|} \hline
                       & $h \equiv 1 \bmod 8$                                                         & $h \equiv 3 \bmod 8$                                                        & $ h \equiv 5 \bmod 8$                                                        & $h \equiv 7 \bmod 8$                                                        \\ \hline
$p=2$                       & Additive                                                  & Additive                                              & Additive                                                  & Additive                                                 \\ \hline
$p=3$                    & Additive                                                 & Additive                                                 & Additive                                                & Additive                                                \\ \hline
$p=h-8$                     & Split                                                        & Non-split                                                  & Non-split                                                   & Split                                                      \\ \hline
$p=h-6$                     & Non-split                                                   & Non-split                                                   & Split                                                       & Split                                                      \\ \hline
$p=h-2$                     & Split                                                       & Split                                                       & Non-split                                                   & Non-split                                                  \\ \hline
$p=h$                      & Split                                                      & Non-split                                                   & Non-split                                                  & Split                                                      \\ \hline
$\prod_p c_{E_h,p}$       & $48$        & $ 16$       & $48$       & $144$      \\ \hline
$\prod_p c_{E_h',p}$      & $48$       & $ 16$      & $48$      & $144$     \\ \hline
$\prod_p c_{\bar{E}_h,p}$ & $144$ & $ 48$ & $ 16$ & $ 48$ \\ \hline
\end{tabular}
\caption{A table representing the local properties of $E_h$. `Split' and `non-split' refer to multiplicative reduction.}
 \label{tabloc}
\end{table}

The cases $h\equiv 1,3 \bmod 8$ are favorable in view of Theorem \ref{thmCass}. In order to keep the local computations required for descent by $2$-isogeny manageable, it helps to have small Tamagawa numbers, so we pick the case $h \equiv 3 \bmod 8$. Combining this with the earlier congruence relation $h \equiv 4 \bmod 15$ yields $h \equiv 19 \bmod 120$, which we assume from here on out. 

\subsection{Descent by \texorpdfstring{$2$}\  -isogeny} \label{subsecEh2-desc}

Let $E_h$ be given by \eqref{eqEh} with $h\equiv 3 \bmod 8$ and $h$, $h-2$, $h-6$ and $h-8$ primes. In order to apply the classical theory of $2$-isogeny descent, we move the $2$-torsion point to $(0,0)$. This yields the models
\begin{align}
    E_h \colon y^2&=x^3+\left(18(h-6)^2-216\right)x^2+108(h-2)(h-6)^3x \label{eqEh2} \\
    E_h' \colon Y^2&=X^3+\left(432-36(h-6)^2\right)X^2-108h^3(h-8)X. \label{eqEh'}
\end{align}
One verifies that both these models are globally minimal. The algorithm in \cite{BibTateAlg} is needed to show that this model of $E_h'$ is minimal at $2$ (we have $\text{ord}_2(\Delta_{E_h'})=14$), while in the other instances the computation of the discriminant suffices.

\begin{lemma} \label{lemEh2desc}
$E_h'(\Q)/\phi(E_h(\Q))=\{O,(0,0)\}$.
\end{lemma}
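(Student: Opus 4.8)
The plan is to run a classical complete descent along the $2$-isogeny $\phi$, identifying the quotient $E_h'(\Q)/\phi(E_h(\Q))$ with the image of the descent homomorphism and then forcing that image down to order $2$ by local obstructions. Writing $E_h'$ as in \eqref{eqEh'} in the form $Y^2=X^3+a'X^2+b'X$ with $a'=432-36(h-6)^2$ and $b'=-108h^3(h-8)$, recall the descent map
$$\alpha\colon E_h'(\Q)\to\Qtwo,\qquad O\mapsto 1,\quad (0,0)\mapsto b',\quad (X,Y)\mapsto X\ \ (X\neq 0),$$
which is a homomorphism with kernel exactly $\phi(E_h(\Q))$, so that $E_h'(\Q)/\phi(E_h(\Q))\cong\im(\alpha)$. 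The standard divisibility argument confines $\im(\alpha)$ to the subgroup of $\Qtwo$ generated by $-1$ and the rational primes dividing $b'$, namely $2,3,|h|$ and $|h-8|$. Since $b'\equiv -3h(h-8)\pmod{{\Q^\times}^2}$ has odd valuation at the prime $|h|$ it is not a square; hence $(0,0)\notin\phi(E_h(\Q))$ and $\#\im(\alpha)\geq 2$. It remains to prove that no class $d_1\in\langle -1,2,3,h,h-8\rangle$ other than $1$ and $b'$ lies in $\im(\alpha)$.

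For this I would attach to each squarefree $d_1\mid b'$ the torsor
$$C_{d_1}\colon\quad z^2=d_1u^4+a'u^2v^2+d_2v^4,\qquad d_2:=b'/d_1,$$
obtained by writing $X=d_1(u/v)^2$ in $Y^2=X(X^2+a'X+b')$; one has $d_1\in\im(\alpha)$ if and only if $C_{d_1}$ has a nontrivial rational point. To exclude a given $d_1$ it therefore suffices to exhibit a single place $p$ with $C_{d_1}(\Q_p)=\emptyset$, since a rational point is in particular $p$-adic. A quick sign analysis shows $b'<0$ for every admissible $h$ (both $h\geq 19$ and $h\leq -101$ give $h^3(h-8)>0$), so $d_1$ and $d_2$ always have opposite signs and the archimedean place imposes nothing. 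At $p=|h-2|$ and $p=|h-6|$ the coefficients $d_1,a',d_2$ are all $p$-adic units, so $C_{d_1}$ has good reduction and is automatically solvable there. The genuinely constraining places are thus $2$, $3$, $|h|$ and $|h-8|$.

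The heart of the argument is then to read off solvability of $C_{d_1}$ at these four primes. Here the hypotheses do the real work: from $h\equiv 3\bmod 8$ one controls the $2$-adic solvability together with the residues of $h-2,h-6,h-8$ modulo $8$; the earlier congruence $h\equiv 4\bmod 15$, giving $h\equiv 1\bmod 3$, controls the behaviour over $\Q_3$; and the primality of $|h|$, $|h-2|$, $|h-6|$, $|h-8|$ together with quadratic reciprocity fixes the Legendre symbols $\left(\tfrac{\,\cdot\,}{|h|}\right)$ and $\left(\tfrac{\,\cdot\,}{|h-8|}\right)$ of the constants appearing in $C_{d_1}$. Feeding this into the local criterion at $2,3,|h|,|h-8|$ eliminates, class by class, every candidate $d_1\notin\{1,b'\}$. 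Consequently $\im(\alpha)=\{1,b'\}$ is a group of order $2$ generated by the class of $(0,0)$, which is precisely the assertion $E_h'(\Q)/\phi(E_h(\Q))=\{O,(0,0)\}$.

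I expect the main obstacle to be bookkeeping rather than ideas: the candidate group has up to $2^5=32$ classes, and for each of the nontrivial ones distinct from $b'$ one must pick a witnessing prime and verify emptiness of $C_{d_1}(\Q_p)$ there, being careful that no Legendre-symbol miscalculation accidentally leaves a spurious class in the image. The decisive simplification is that the single residue class $h\equiv 19\bmod 120$ (coming from $h\equiv 3\bmod 8$ and $h\equiv 4\bmod 15$) together with the primality hypotheses pins down every relevant symbol uniformly in $h$, so the local analysis need only be performed once and then applies to all admissible $h$ simultaneously.
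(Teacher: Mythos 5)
Your overall framework is the same descent the paper runs: identify $E_h'(\Q)/\phi(E_h(\Q))$ with $\im(\alpha)$, confine $\im(\alpha)$ to $\langle -1,2,3,h,h-8\rangle$ by the divisibility argument, and then cut it down to $\{1,-3h(h-8)\}$ by local conditions (your criterion ``$d_1\in\im(\alpha)$ iff $C_{d_1}(\Q_p)\neq\emptyset$ for all $p$'' is equivalent to the paper's computation of the local images $\im(\alpha_p)$). But there is a genuine gap in your choice of places. You discard $|h-2|$ and $|h-6|$ on the grounds that $d_1,a',d_2$ are $p$-adic units there, ``so $C_{d_1}$ has good reduction.'' That inference is false: good reduction of the quartic torsor requires its \emph{discriminant} to be a unit, and both $|h-2|$ and $|h-6|$ divide $\Delta_{E_h}=-2^{10}3^9h^3(h-2)^2(h-6)^6(h-8)$, so $E_h'$ (hence every $C_{d_1}$) has multiplicative, not good, reduction there and local solvability is not automatic. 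Worse, $|h-2|$ is precisely the decisive prime in the paper's proof: the conditions at $2$ and $3$ only confine the image to $\langle 3h,\,-(h-8)\rangle$, and it is the analysis at $p=|h-2|$ --- all of $E_h'(\Q_{h-2})$ has nonsingular reduction on the nodal curve $Y^2=X(X-72)^2$, and the resulting local image $\im(\alpha_{h-2})$ is trivial --- that eliminates $3h$ and $-(h-8)$. Indeed both of those classes are congruent to $6$ modulo $h-2$, and $6$ is a nonsquare modulo $|h-2|$ (since $h\equiv 3\bmod 8$ makes $2$ a square mod $|h-2|$ while reciprocity makes $3$ a nonsquare; the paper encodes this via the point $(18,27\gamma)$ mapping to the nonsquare $-3$ while $18$ is a square), so they fail exactly the condition you threw away.

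Meanwhile the places $|h|$ and $|h-8|$ that you designate as ``genuinely constraining'' impose no condition at all: there $E_h$ has non-split multiplicative reduction with $v_p(\Delta_{E_h})$ odd, so $c_{E_h,p}=1$, while the $2$-isogenous curve has $v_p(\Delta_{E_h'})$ even and $c_{E_h',p}=2$; the standard local index computation for a $2$-isogeny at an odd prime, $\#\im(\alpha_p)=\#E_h(\Q_p)[\phi]\cdot c_{E_h',p}/c_{E_h,p}=4$, shows $\im(\alpha_p)$ is all of $\Q_p^\times/{\Q_p^\times}^2$, i.e.\ every $C_{d_1}$ is automatically solvable at these primes. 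So executing your plan with the places $2,3,|h|,|h-8|$ would stall with the candidate image $\{1,\,3h,\,-(h-8),\,-3h(h-8)\}$ of order $4$, which only yields rank at most $1$ via the rank formula and does not prove the lemma. The fix is simply to reinstate the local analysis at $|h-2|$ (the condition at $|h-6|$ is likewise nontrivial but turns out not to be needed), which is exactly the paper's final step; the rest of your outline --- the map $\alpha$, the divisibility confinement, and the $2$-adic and $3$-adic conditions coming from $h\equiv 3\bmod 8$ and $h\equiv 1\bmod 3$ --- matches the paper's route.
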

\begin{proof}
Following the classical descent by $2$-isogeny in \cite[Proposition~X.4.9]{BibSilv}, there is an injective homomorphism 
\begin{align*}
\a\colon E_h'(\Q)/\phi(E(\Q)) &\to \Q^\times / {\Q^\times}^2  \\
(X,Y) &\mapsto X \cdot {\Q^\times}^2 \;\;\;(\text{for}\;\;X\neq 0),\\
(0,0) &\mapsto -108h^3(h-8) \cdot {\Q^\times}^2 = -3h(h-8) \cdot {\Q^\times}^2.
\end{align*}
Hence $$\langle-3h(h-8)\rangle \subseteq \im(\a) \subseteq \Selphi(E/\Q)\subseteq \langle -1,2,3,h,h-2,h-6,h-8 \rangle.$$ We prove that $\Selphi(E/\Q)=\{1,-3h(h-8)\}$ by localizing to various primes.

Note that the square-free part of $X$ has to divide $-108h^3(h-8)$ in order to yield a point $(X,Y) \in E_h'(\Q)$ (see \cite[page~86-87]{BibTateSilv} for a more elaborate argumentation). This gives $\im(\a) \subseteq \langle -1,2,3,h,h-8 \rangle$.

For any prime $p$, we let $\a_p: E_h'(\Q_p)/\phi(E(\Q_p)) \to \Q_p^\times / {\Q_p^\times}^2 $ denote the localization of $\a$ at $p$. We first show that $E_{h,1}'(\Q_p)$ does not contribute. For odd $p$ this is immediate as $p\Z_p/2p\Z_p \cong 0$. For $p=2$, this is a straightforward calculation using the Laurent series in \cite[page 118]{BibSilv}. See also \cite[Lemma 3.2.1]{TOP1993211} for a similar calculation.

First, we analyze $$\a_2\colon E_h'(\Q_2)/\phi(E_h(\Q_2)) \to \Q_2^\times / {\Q_2^\times}^2= \langle -1,5,2\rangle.$$ Using our computation of Tamagawa numbers and the minimality of the model, we infer $\# (E'(\Q_2)/E_0'(\Q_2))=2$ (note that to ease notation, here and once more below we
removed the subscript $h$ in $E_h'$).
The reduction of $(X,Y)$ is singular precisely when $X$ is even, in which case it reduces to $(0,0)$. If $X$ is odd, then so is $Y$ and they satisfy the homogeneous equation
\begin{align*}
Y^2Z &=X^3+\left(432-36(h-6)^2\right)X^2Z-108h^3(h-8)XZ^2,
\intertext{implying}
Z&\equiv X+4Z+4X \equiv X \bmod 8.
\end{align*}
Hence $ X/Z = 1 \cdot {\Q_2^\times}^2 $. Moreover, observe that $$\a_2((0,0))=-108h^3(h-8) \cdot {\Q_2^\times}^2 = 5 {\Q_2^\times}^2,$$ and hence $\im(\a_2)=\langle 5 \rangle$. This gives the restriction $\im(\a) \subseteq \langle -3, -h,-(h-8)\rangle$.

Now consider the localized map $$\a_3:E_h'(\Q_3)/\phi(E_h(\Q_3)) \to \Q_3^\times / {\Q_3^\times}^2=\langle -1,3 \rangle.$$ Again we have computed $\#(E'(\Q_3)/E_0'(\Q_3))=2$.
The point $(X,Y)$ reduces to a singular point if and only if $3|X$, and otherwise $X \equiv 1 \bmod 3$. Moreover, $(0,0)$ is mapped to $-3h(h-8) \cdot {\Q_3^\times}^2 = 3 {\Q_3^\times}^2$. We conclude that $\im(\a_3)=\langle 3 \rangle$, giving the restriction $\im(\a) \subseteq \langle 3h, -(h-8) \rangle$.

The final restriction comes from the localized map
$$\a_{h-2}:E_h'(\Q_{h-2})/\phi(E_h(\Q_{h-2})) \to \Q_{h-2}^\times / {\Q_{h-2}^\times}^2=\langle -3,h-2 \rangle.$$ We see that $-3h(h-8) \cdot {\Q_{h-2}^\times}^2 = 36 \cdot {\Q_{h-2}^\times}^2 = 1 \cdot {\Q_{h-2}^\times}^2.$
The algorithm in \cite{BibTateAlg} shows that all points of $E_h'(\Q_{h-2})$ have good reduction. Over $\F_{h-2}$ the curve $E_h'$ is given by
$$E_h'\colon Y^2= X(X-72)^2. $$
Following \cite[Proposition~III.2.5]{BibSilv}, let $\g=\sqrt{72}\in \F_{h-2}$ be the slope of a tangent line at the node of $E_h'(\F_{h-2})$. The isomorphism
\[
    E_{h,ns}'(\F_{h-2}) \xrightarrow{\sim} \F_{h-2}^\times\;\; : \quad 
    (X,Y) \mapsto \frac{Y + \g (X-72)}{Y-\g(X-72)}
\]
maps the point $(18,27 \g)$ to $-3$, which is not a square, so this point is not a multiple of $2$. Finally, $18$ is a square in $\Q_{h-2}$, so we infer $\im(\a_{h-2})=\{1\}$. This gives the final restriction
$$ \im(\a)= \Selphi(E/\Q) = \{1,-3h(h-8)\}$$
and therefore $$ E_h'(E/\Q)/\phi(E_h(\Q)) = \{O,(0,0)\},$$
as desired.
\end{proof}

\begin{lemma} \label{lemEh'2desc}
$E_h(\Q)/\phih(E_h'(\Q))=\{O,(0,0)\}$.
\end{lemma}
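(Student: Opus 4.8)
The plan is to derive this from Cassels' theorem (Theorem \ref{thmCass}) and Lemma \ref{lemEh2desc}, rather than redo the local descent for the dual isogeny. Exactly as in Lemma \ref{lemEh2desc}, \cite[Prop.~X.4.9]{BibSilv} applied to $\phih$ furnishes an injection
$$\hat{\a}\colon E_h(\Q)/\phih(E_h'(\Q))\hookrightarrow \Q^\times/{\Q^\times}^2,\qquad (x,y)\mapsto x,\quad (0,0)\mapsto 108(h-2)(h-6)^3,$$
and $108(h-2)(h-6)^3\equiv 3(h-2)(h-6)\pmod{{\Q^\times}^2}$ is a nonsquare, so the class of $(0,0)$ is nontrivial and $\{O,(0,0)\}$ already gives a subgroup of order $2$. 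Since $E_h(\Q)/\phih(E_h'(\Q))$ injects into $\Selphih(E_h'/\Q)$, it is enough to prove $\#\Selphih(E_h'/\Q)=2$: the exact sequence $0\to E_h(\Q)/\phih(E_h'(\Q))\to\Selphih(E_h'/\Q)\to\Sha(E_h'/\Q)[\phih]\to 0$ then forces $E_h(\Q)/\phih(E_h'(\Q))=\{O,(0,0)\}$.

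To bound $\#\Selphih(E_h'/\Q)$ I would apply Theorem \ref{thmCass} to $\phi\colon E_h\to E_h'$ and its dual $\phih$. Both kernels are generated by the rational point $(0,0)$, so $\#E_h(\Q)[\phi]=\#E_h'(\Q)[\phih]=2$; Table \ref{tabloc} gives $\prod_p c_{E_h,p}=\prod_p c_{E_h',p}=16$ in the case $h\equiv 3\bmod 8$; and Lemma \ref{lemEh2desc} gives $\#\Selphi(E_h/\Q)=2$. After cancellation the formula of Theorem \ref{thmCass} collapses to
$$\#\Selphih(E_h'/\Q)=2\cdot\frac{\Om_{E_h}}{\Om_{E_h'}},$$
so the lemma reduces to proving that the two real periods coincide.

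The period ratio is the one new computation and the step I expect to require the most care. In the model \eqref{eqEh2}, write $E_h\colon y^2=x(x^2+ax+b)$ with $b=108(h-2)(h-6)^3>0$; its quadratic factor has discriminant $a^2-4b=-108h^3(h-8)$, which is the $X$-coefficient of $E_h'$ in \eqref{eqEh'} and is negative for every admissible $h$ (both when $h>8$ and when $h<0$). Hence $x^2+ax+b$ has no real root and $E_h(\R)$ is connected, whereas for $E_h'$ the cubic has negative constant term $b'=a^2-4b$, so its quadratic factor has two real roots of opposite sign and $E_h'(\R)$ has two components. Both \eqref{eqEh2} and \eqref{eqEh'} are global minimal models ($v_p(\Delta)<12$ at every $p$), and $\phi$ is the standard $2$-isogeny with $\phi^{*}\om_{E_h'}=\om_{E_h}$. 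Its restriction $\phi\colon E_h(\R)\to E_h'(\R)$ has kernel the real point $(0,0)$, so it is a $2$-to-$1$ homomorphism; its image is connected, hence equals the identity component $E_h'(\R)^0$, which it covers twice. Computing the invariant integral through this covering gives
$$\Om_{E_h}=\int_{E_h(\R)}|\phi^{*}\om_{E_h'}|=2\int_{E_h'(\R)^0}|\om_{E_h'}|=2\cdot\tfrac12\,\Om_{E_h'}=\Om_{E_h'}.$$
Hence the ratio is $1$, $\#\Selphih(E_h'/\Q)=2$, and the lemma follows.

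As a robustness check, and to stay parallel with Lemma \ref{lemEh2desc}, one can instead argue by direct local descent: the $\phih$-Selmer elements are supported on the primes dividing $2b=2^{3}3^{3}(h-2)(h-6)^3$, so $\Selphih(E_h'/\Q)\subseteq\langle -1,2,3,h-2,h-6\rangle$, and local analysis at $2$, at $3$, and at the primes $h$ and $h-8$ lying outside this support (evaluating the relevant quadratic residue symbols via $h\equiv 19\bmod 120$) should squeeze the group down to $\{1,3(h-2)(h-6)\}$. As in Lemma \ref{lemEh2desc}, the delicate cases there are the additive primes $2$ and $3$.
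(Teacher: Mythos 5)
Your overall strategy is correct and genuinely different from the paper's. The paper proves this lemma by a second direct descent: it bounds $\im(\a')$ through local analysis at $2$, $3$, $h-8$ and $h$, the last two via the non-split multiplicative reduction and explicit points mapping to non-squares under the parametrization at the node. You instead deduce $\#\Selphih(E_h'/\Q)=2$ from Cassels' formula together with Lemma \ref{lemEh2desc}, Table \ref{tabloc}, and one archimedean computation; since $\{O,(0,0)\}$ injects into $\Selphih(E_h'/\Q)$ with $\hat{\a}((0,0))=3(h-2)(h-6)$ a non-square (note $3$, $|h-2|$, $|h-6|$ are distinct primes and the product is positive for all admissible $h$), this pins down $E_h(\Q)/\phih(E_h'(\Q))$. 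This trades four local computations for a real-period computation and is very much in the spirit of the paper itself: your covering argument for $\Om_{E_h}=\Om_{E_h'}$ is exactly \cite[Lemma~7.4]{BibDokLoc} applied to $\phi$, whose real kernel and cokernel both have order $2$, just as the paper applies it to $\psi$ in Theorem \ref{thmshaEh}. Your component counts are right: $a^2-4b=-108h^3(h-8)<0$ for all admissible $h$ (the only residue $h\equiv 3\bmod 8$ in $(0,8)$ is $h=3$, excluded since $|h-2|=1$ is not prime), and the quadratic factor of $E_h'$ has real roots of opposite sign. (One small point: your identity $a^2-4b=-108h^3(h-8)$ presupposes the $x^2$-coefficient $18(h-6)^2-216$, consistent with $-2a=432-36(h-6)^2$ in \eqref{eqEh'}; as printed, \eqref{eqEh2} omits the $-216$, a typo your computation implicitly corrects.)

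There is, however, one step whose justification fails as written, and it sits at the crux: the minimality check for \eqref{eqEh'}. One computes
$$\Delta_{E_h'}=2^{14}\,3^{9}\,h^{6}(h-2)(h-6)^{3}(h-8)^{2},$$
so $v_2(\Delta_{E_h'})=14\not<12$, and your criterion ``$v_p(\Delta)<12$ at every $p$'' breaks at $p=2$ for precisely the curve whose period enters the denominator. The conclusion is still true but needs an argument: for \eqref{eqEh'} one has $c_4=2^{6}3^{4}m$ with $m$ odd, so a non-minimal model at $2$ would yield minimal invariants with $v_2(c_4)=6-4=2$; but for any integral Weierstrass equation, $c_4=b_2^2-24b_4$ with $b_2=a_1^2+4a_2$ forces $v_2(c_4)=0$ (if $a_1$ is odd) or $v_2(c_4)\geq 4$ (if $a_1$ is even), a contradiction. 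Hence \eqref{eqEh'} is minimal at $2$, and $v_p(\Delta)<12$ does suffice at all other primes. This repair is not optional bookkeeping: had \eqref{eqEh'} been non-minimal at $2$, its N\'eron period would be twice the one you computed, Cassels' formula would give $\#\Selphih(E_h'/\Q)=4$, and the lemma would no longer follow from your count. With this fix, your proof is complete and correct.
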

\begin{proof}

We make use of Theorem \ref{thmCass}. Clearly $\# E_h'(\Q)[\phih]=\#E_h(\Q)[\phi]=2$. Furthermore, Table \ref{tabloc} shows $\prod_p c_{E_h,p}=\prod_p c_{E_h',p}=16$. For the real periods, we make use of \cite[Lemma 7.4]{BibDokLoc}:
\begin{equation} \label{eqreaper}
\frac{\Om_{E_h}}{\Om_{E_h'}} = \frac{\# \textup{ker} \left( \phi\colon E_h(\R) \to E_h'(\R) \right) }{\#\coker \left( \phi\colon E_h(\R) \to E_h'(\R) \right)}\cdot \left| \frac{\om_{E_h}}{\phi^* \om_{E_h'}} \right|,
\end{equation}
where $\om_E$ denotes the invariant differential of a minimal model of an elliptic curve $E$. First we observe that $$\# \textup{ker} \left( \phi\colon E_h(\R) \to E_h'(\R) \right)=\#\coker \left( \phi\colon E_h(\R) \to E_h'(\R) \right)=2.$$ Recall that the models \eqref{eqEh2} and \eqref{eqEh'} are minimal. Using the explicit form of $\phi$ (see, for instance, \cite[Proposition~X.4.9]{BibSilv}) and writing $a:=432-36(h-6)^2$ and $b:=-108h^3(h-8)$, we compute
\begin{align*}
    \phi^*\om_{E_h'} &= \phi^*\left(\frac{dX}{2Y}\right)=\frac{d(x^{-2}(x^3+ax^2+bx))}{2y(bx^{-2}-1)} \\
    &=\frac{(1-bx^{-2})dx}{2y(bx^{-2}-1)} = -\om_{E_h}.
\end{align*}
Thus \eqref{eqreaper} yields that $\Om_{E_h}=\Om_{E_h'}$. Now one applies Theorem \ref{thmCass} to infer that $\#\Selphi(E_h/\Q)=\#\Selphih(E_h'/\Q)=2$, from which the result follows immediately.

\end{proof}

\begin{corollary} \label{corrank0}
The Mordell-Weil groups $E_h(\Q)$, $E_h'(\Q)$ and $\bar{E}_h(\Q)$ have rank zero.
\end{corollary}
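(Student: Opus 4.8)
The plan is to read the rank off from the two preceding lemmas by means of the rank formula attached to descent by a $2$-isogeny. First I would record that Lemma \ref{lemEh2desc} and Lemma \ref{lemEh'2desc} give
$$\#\bigl(E_h'(\Q)/\phi(E_h(\Q))\bigr)=\#\bigl(E_h(\Q)/\phih(E_h'(\Q))\bigr)=2,$$
since in each case the quotient equals $\{O,(0,0)\}$; equivalently, the images of the descent maps $\a$ and $\a'$ in $\Q^\times/{\Q^\times}^2$ each have order $2$.

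Next I would invoke the standard rank formula for the $2$-isogeny $\phi\colon E_h\to E_h'$ with dual $\phih$ (see \cite[\S X.4]{BibSilv}, or \cite{BibTateSilv}), namely
$$2^{\,\mathrm{rank}\,E_h(\Q)}=\frac{\#\bigl(E_h'(\Q)/\phi(E_h(\Q))\bigr)\cdot\#\bigl(E_h(\Q)/\phih(E_h'(\Q))\bigr)}{4}.$$
Substituting the two values above gives $2^{\,\mathrm{rank}\,E_h(\Q)}=\tfrac{2\cdot 2}{4}=1$, hence $\mathrm{rank}\,E_h(\Q)=0$.

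Finally, $E_h'$ and $\bar{E}_h$ are isogenous to $E_h$ over $\Q$ via $\phi$ and $\psi$ respectively, and a $\Q$-isogeny induces an isomorphism of $\Q$-vector spaces after tensoring the Mordell-Weil groups with $\Q$. Thus all three curves have equal rank, which is therefore $0$.

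Since the arithmetic is entirely carried out in the two lemmas, there is no real obstacle in this corollary; the only steps demanding care are quoting the rank formula with the correct constant $4$ — the product of the orders of the kernels $E_h[\phi]$ and $E_h'[\phih]$ — and using the invariance of rank under $\Q$-isogeny for the passage to $E_h'$ and $\bar{E}_h$.
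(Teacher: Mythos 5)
Your proposal is correct and follows essentially the same route as the paper: the paper cites the identical rank formula $2^{\mathrm{rank}_\Q(E_h)}=\#\im(\a)\,\#\im(\a')/4$ from \cite[page 91]{BibTateSilv}, substitutes the two lemmas, and concludes for $E_h'$ and $\bar{E}_h$ by isogeny invariance of rank, exactly as you do. Your added remark that a $\Q$-isogeny induces an isomorphism after tensoring with $\Q$ simply makes explicit a step the paper leaves implicit.
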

\begin{proof}
This is a consequence of the formula 
(see, for example, \cite[page 91]{BibTateSilv})
$$2^{\text{rank}_\Q(E_h)}=\frac{\# \im(\a) \# \im(\a')}{4}.$$  
Since $E_h'$ and $\bar{E}_h$ are isogenous to $E_h$, these also have rank zero over $\Q$.
\end{proof}

\subsection{Descent by \texorpdfstring{$3$}\ -isogeny} \label{subsecEh3-desc}

We now compute $\Selpsi(E/\Q)$. Corollary \ref{corrank0} guarantees that $E_h(\Q)$ has rank zero and our choice of Equation \eqref{eqEh} shows that $E_h(\Q)$ does not have $3$-torsion. Thus non-zero elements of $\Selpsi(E_h/\Q)$ cannot come from points on $\bar{E}_h(\Q)/\phi(E(\Q))$. 

\begin{theorem} \label{thmshaEh}
$\Selpsi(E/\Q)$ consists of $9$ elements and is isomorphic to $\Sha(E_h/\Q)[3]$.
\end{theorem}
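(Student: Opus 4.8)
The plan is to identify $\Selpsi(E/\Q)$ with $\Sha(E/\Q)[3]$ and to pin down its order by combining Cassels' formula (Theorem \ref{thmCass}) with a direct computation of the dual Selmer group.

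First I would dispose of the contributions coming from rational points. By Corollary \ref{corrank0} the groups $E(\Q)$ and $\bar{E}(\Q)$ have rank zero, and the model \eqref{eqEh} forces $E(\Q)[3]=0$, so both $E(\Q)$ and $\bar{E}(\Q)$ equal the $\Z/2\Z$ we built in. Since $\psi$ has degree $3$ it restricts to an isomorphism on $2$-torsion, so $\psi\colon E(\Q)\to\bar{E}(\Q)$ is onto and $\bar{E}(\Q)/\psi(E(\Q))=0$; the same argument with $\psih$ gives $E(\Q)/\psih(\bar{E}(\Q))=0$. Feeding this into the exact sequence coming from \eqref{diagloc} yields $\Selpsi(E/\Q)\cong\Sha(E/\Q)[\psi]$ and $\Selpsih(\bar{E}/\Q)\cong\Sha(\bar{E}/\Q)[\psih]$. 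In particular every nontrivial $\psi$-Selmer class is a genuine nonzero element of $\Sha$, which is what makes the associated cubics $C_t$ of Theorem \ref{thmhomspeq} violate the Hasse principle.

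Next I would apply Theorem \ref{thmCass}. The kernels $E[\psi]$ and $\bar{E}[\psih]$ are generated by points with $x=0$ and irrational $y$-coordinate (because $A=-216$ and $\bar A$ are nonsquares), so $\#E(\Q)[\psi]=\#\bar{E}(\Q)[\psih]=1$. The Tamagawa products are read off from Table \ref{tabloc} in the case $h\equiv3\bmod8$: $\prod_p c_{E_h,p}=16$ and $\prod_p c_{\bar{E}_h,p}=48$. The remaining input is the ratio of real periods; here I would compute $\psi^*\om_{\bar E}$ explicitly in terms of $\om_E$ (for the displayed model one finds $\psi^*(dx/2y)_{\bar E}=-\tfrac13(dx/2y)_E$) and then correct for the fact that the displayed Weierstrass equations need not be minimal, comparing N\'eron differentials on the minimal models. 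Both curves have connected real locus, since $\Delta_E$ and $\Delta_{\bar E}$ share a sign, and the upshot should be $\Om_E/\Om_{\bar E}=1/3$. Substituting into Theorem \ref{thmCass} gives $\#\Selpsih(\bar{E}/\Q)/\#\Selpsi(E/\Q)=1/9$; as $\#\Selpsih\ge1$ this already proves $\#\Selpsi(E/\Q)\ge9$, i.e.\ the existence of the promised Selmer elements.

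To get equality I would show $\Selpsih(\bar{E}/\Q)=0$ by a descent along $\psih$, carried out over $M=\Q(E[\psi])=\Q(\sqrt{-6})$: Lemma \ref{lemdel} (applied to $\psih$) describes the connecting map, and Lemma \ref{lemcube} restricts any cubefree class in its image to factor into split primes and non-principal ramified primes of $M$. Combined with the norm condition defining the starred Selmer set and local conditions at the six bad primes — imitating the reductions in Lemmas \ref{lemEh2desc}--\ref{lemEh'2desc} — this should eliminate every nontrivial candidate class, giving $\Selpsih=0$, after which Theorem \ref{thmCass} forces $\#\Selpsi(E/\Q)=9$. Finally $\Sha(\bar{E}/\Q)[\psih]\cong\Selpsih=0$, so in the sequence $0\to\Sha(E)[\psi]\to\Sha(E)[3]\xrightarrow{\psi}\Sha(\bar{E})[\psih]$ induced by $[3]=\psih\circ\psi$ the last term vanishes, whence $\Sha(E/\Q)[3]=\Sha(E/\Q)[\psi]\cong\Selpsi(E/\Q)$ has order $9$. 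The main obstacle is twofold and both parts are bookkeeping rather than conceptual: getting the real period ratio exactly right (the naive differential gives a factor $3$ that must be reconciled against the Tamagawa ratio $16/48$ via minimality of the models), and running the $\psih$-descent over $\Q(\sqrt{-6})$ cleanly; here the class number $2$, the trivial unit contribution modulo cubes, and Lemma \ref{lemcube} keep the candidate list short enough to clear prime by prime.
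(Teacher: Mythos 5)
Your proposal is correct in substance and shares the paper's skeleton --- Cassels' formula with period ratio $\Om_{E_h}/\Om_{\bar{E}_h}=1/3$ and Tamagawa products $16$ versus $48$ gives $\#\Selpsi(E/\Q)=9\cdot\#\Selpsih(\bar{E}/\Q)$, rank zero plus absence of rational $3$-torsion identifies $\Selpsi(E/\Q)$ with $\Sha(E_h/\Q)[\psi]$, and triviality of the dual Selmer group upgrades $\Sha[\psi]$ to $\Sha[3]$ --- but you close the argument by a genuinely different route. The paper never touches $\Q(\sqrt{-6})$: it caps the Selmer group directly by the containment $\Selpsi(E/\Q)\subseteq\Lstar$ inside $L=\Q(\sqrt{2})$, where $\Z[\sqrt{2}]$ is a PID, the primes $3,h,h-6,h-8$ are inert, $2$ is principally ramified, and only $h-2$ splits, so $\Lstar$ is generated by $(k+l\sqrt{2})^2(k-l\sqrt{2})$ and the fundamental unit $1+\sqrt{2}$, giving $\#\Lstar=9$ and hence $9\le\#\Selpsi\le 9$; triviality of $\Selpsih$ then falls out of Cassels rather than being proved first. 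Your route --- kill $\Selpsih(\bar{E}/\Q)$ by descent over $M=\Q(E[\psi])=\Q(\sqrt{-6})$ and let Cassels force $\#\Selpsi=9$ --- is logically equivalent and in fact goes through more easily than your closing paragraph anticipates: no prime-by-prime local analysis is needed, because the analogue of $\Lstar$ over $M$ is already trivial on global grounds. Indeed the units of $\Z[\sqrt{-6}]$ are $\pm1$, both cubes; the class group $\Z/2\Z$ has no $3$-torsion; the four odd bad primes are inert since $\left(\frac{-6}{p}\right)=-1$ under the congruences on $h$; and the non-principal ramified primes above $2$ and $3$ can only combine, subject to the cube-norm condition, into $6\sqrt{-6}=-(\sqrt{-6})^3$, a cube. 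So your ``main obstacle'' dissolves, and the trade-off between the two approaches is just which quadratic field one computes in: the paper's choice exploits the engineered field $\Q(\sqrt{2})$ (the reason for picking $c=6$ in the construction), while yours exploits the complex quadratic side where units are harmless. Two small caveats: your intermediate claim $\psi^*(d\xi/2\eta)_{\bar{E}}=-\tfrac13(dx/2y)_E$ for the displayed model has the wrong factor (the paper's computation on the minimal model of $\bar{E}_h$ gives $\psi^*\om_{\bar{E}_h}=-3\om_{E_h}$, and for the displayed non-minimal model the factor is $-1$), though your final ratio $1/3$ is the correct one after the minimality adjustment you flag; and as written the $\psih$-descent is a sketch (``should eliminate every nontrivial candidate class''), so to be complete you would need to record the short global computation above in place of the promised local clearing.
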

\begin{proof}
The lower bound for $\#\Selpsi(E/\Q)$ comes from Theorem \ref{thmCass}. The only quantities yet unknown are the real periods of $E$ and $\bar{E}$. We again use \eqref{eqreaper}.
Both the real kernel and the real cokernel of $\psi$ are trivial, so we need only compute $\om_{E_h}$ and $\psi^* \om_{\bar{E}_h}$. A minimal model of $\bar{E}_h$ is given by $y^2=x^3+8(3x-(h-2)(h-6)^2)^2$. 
Now write $A:=8$ and $B:=(h-2)(h-6)^2$. The explicit form \eqref{eqpsiAB} of $\psi$ then gives
\begin{align*}
    \psi^*\om_{\bar{E}_h} = \psi^*\left(\frac{d\x}{2\y} \right)
    &= \frac{d\left( 3^{-3}x^{-2}(6y^2+6AB^2-3x^3-2Ax^2) \right) }{2 \cdot 3^{-3} y(8AB^2-x^3-4ABx)x^{-3} }& \\
    &= \frac{x^3 d\left(x^{-2} (6\left(x^3+A(x-B)^2\right)+6AB^2-3x^3-2Ax^2) \right) }{2 y(8AB^2-x^3-4ABx)} \\
    &= \frac{x^3(3+12ABx^{-2}-24AB^2x^{-3})dx}{2 y(8AB^2-x^3-4ABx)} \\
    &= \frac{-3dx}{2y} = -3\om_{E_h}.
\end{align*}
Equation \eqref{eqreaper} in the present situation reads
$$\frac{\Om_{E_h}}{\Om_{\bar{E}_h}} = \frac{1}{1}\cdot \frac{1}{3}$$
so Theorem \ref{thmCass} yields
$$\frac{\# \Selpsih(\bar{E}_h/\Q)}{\# \Selpsi (E_h/\Q)} = \frac{\# \bar{E}_h(\Q)[\psih] \Om_{E_h} \prod_{p} c_{E_h,p} } { \# E_h(\Q)[\psi] \Om_{\bar{E}_h} \prod_{p} c_{\bar{E}_h,p} } = \frac{ 1 \cdot 1 \cdot 16} {1 \cdot 3 \cdot 48} = \frac{1}{9} .$$
As a consequence $9 | \# \Selpsi(E_h/\Q)$.

An upper bound on $\# \Selpsi(E_h/\Q)$ is obtained by arithmetic in $L=\Q(\sqrt{2})=\Q(\bar{E}_h[\psih])$. We compute $\Lstar$, the elements of $\Lthree$ that only ramify at the bad primes and have cube norm. Recall that by Lemma \ref{lemcube} the inert primes and the principally ramified primes of $\mathcal{O}_L$ cannot contribute. The prime $2$ is ramified in $\mathcal{O}_L=\Z[\sqrt{2}]$, a principal ideal domain. Since $2$ is non-square in $\F_3$, $\F_h$, $\F_{h-6}$ and $\F_{h-8}$, these primes are inert. Only the prime $h-2=(k+l\sqrt{2})(k-l\sqrt{2})$ is split. Thus $\Lstar$ is generated by $(k+l\sqrt{2})^2(k-l\sqrt{2})$ and the fundamental unit $1+\sqrt{2}$, but this gives $\#\Lstar =9$. We conclude
$$9 \leq \# \Selpsi(E/\Q) \leq \#\Lstar \leq 9.$$ 

Since $E_h(\Q)$ and $\bar{E}_h(\Q)$ have rank zero and no $3$-torsion, $\bar{E}_h(\Q)/\psi(E(\Q))$ is trivial. The exact sequence
\begin{equation*} \label{eqselsha3}
    0 \to \bar{E_h}(\Q)/\psi(E_h(\Q)) \to \Selpsi(E_h/\Q) \to \Sha(E_h/\Q)[\psi] \to 0
\end{equation*}
then implies $\Selpsi(E_h/\Q) \cong \Sha(E_h/\Q)[\psi]$. Finally, since $\Selpsih(E_h'/\Q)$ is trivial by Theorem \ref{thmCass}, we conclude $\Sha(E_h/\Q)[\psi] \cong \Sha(E_h/\Q)[3]$, which finishes the proof.
\end{proof}

\begin{corollary}
Assume $h$ is a positive integer that satisfies the following conditions:
\begin{itemize}
    \item $h \equiv 3 \bmod 8$.
    \item $h$, $h-2$, $h-6$ and $h-8$ are prime numbers.
\end{itemize}
Then the plane cubic
\[
    C_{1+\sqrt{2}}^h\colon 3w^2z+2z^3+w^3+6wz^2+2(h-2)^2(h-8)=
     12z^2-6w^2
\]
has a point over every completion of $\Q$, but not over $\Q$ itself.
\end{corollary}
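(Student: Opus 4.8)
The plan is to recognize the two displayed cubics as the homogeneous spaces $C_t$ produced by Theorem \ref{thmhomspeq} for the two nontrivial classes $t=\pm 1+\sqrt{2}$ of the fundamental unit of $L=\Q(\sqrt{2})$, and then to read off everywhere-local solubility from the fact that these classes lie in $\Selpsi(E_h/\Q)$, while global insolubility follows from their nontriviality in $\Lthree$. First I would record that the hypotheses $h\equiv 3\bmod 8$ together with the primality of $|h|$, $|h-2|$, $|h-6|$, $|h-8|$ force $h\equiv 4\bmod 15$, hence $h\equiv 19\bmod 120$, so that $E_h$ satisfies all the running assumptions under which Theorem \ref{thmshaEh} was proved. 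In particular $\Selpsi(E_h/\Q)=\Lstar$ has order $9$, equals $\Sha(E_h/\Q)[3]$, is generated by the split-prime class $(k+l\sqrt{2})^2(k-l\sqrt{2})$ and the fundamental unit $1+\sqrt{2}$, and satisfies $\bar{E}_h(\Q)/\psi(E_h(\Q))=0$. The unit classes $\pm 1+\sqrt{2}$ are the natural choice because they carry no split-prime factor, which is why the corresponding cubics have the clean constant term $2(h-2)^2(h-8)$ rather than one involving $k,l$.

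The key computational step, and the main obstacle, is to verify that the two cubics in the statement are exactly $C_{1+\sqrt{2}}$ and $C_{-1+\sqrt{2}}$. For this I would fix an integral model of $\bar{E}_h$ of the shape $\y^2=\x^3+\bar{A}(\x-\bar{B})^2$, keeping careful track of the scaling that relates it to the minimal model $y^2=x^3+8(3x-(h-2)(h-6)^2)^2$ used in the proof of Theorem \ref{thmshaEh}; choose cubefree integral representatives of the unit classes $\pm 1+\sqrt{2}$ in $\O_L$; compute $s$ with $N_{L/\Q}(t)=s^3$; and substitute into the fourth case of Theorem \ref{thmhomspeq}, namely $3uw^2z+\bar{A}uz^3+vw^3+3\bar{A}vwz^2+\bar{B}=s(w^2-\bar{A}z^2)$ with $t=u+v\sqrt{\bar{A}}$. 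The delicate point is purely bookkeeping: one must pin down the correct integral model and representative, and track how rescaling the model rescales $u$, $v$ and $s$, so that the left-hand coefficients reduce to $18,432,1,216$, the constant to $2(h-2)^2(h-8)$, and the right-hand side to $\pm(432z^2-6w^2)$, the two signs matching $1+\sqrt{2}$ and its inverse $(1+\sqrt{2})^{-1}\equiv -1+\sqrt{2}$ modulo cubes. This is a finite verification, but it is where essentially all of the arithmetic of the corollary is concentrated.

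Granting the identification, the two solubility assertions are immediate. Since $\pm 1+\sqrt{2}\in\Selpsi(E_h/\Q)$, Definition \ref{defsel} and the commutative diagram \eqref{diagloc} show that the class of $C_{\pm 1+\sqrt{2}}$ dies in $\WC(E_h/\Q_v)[\psi]$ for every place $v$ of $\Q$; a homogeneous space whose class in $\WC(E_h/\Q_v)$ is trivial has a $\Q_v$-rational point, so each cubic has points in every completion of $\Q$, including $\R$. Conversely, a $\Q$-rational point on $C_t$ would make $[C_t]$ trivial in $\WC(E_h/\Q)$, hence would place $t$ in $\im(\d)=\bar{E}_h(\Q)/\psi(E_h(\Q))$ by exactness of \eqref{eqpsigalL}; but this group is trivial, whereas $1+\sqrt{2}$, being the fundamental unit and so not a cube in $L^\times$, is nontrivial in $\Lthree$, a contradiction. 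Equivalently, by Theorem \ref{thmshaEh} the classes $\pm 1+\sqrt{2}$ map to nonzero elements of $\Sha(E_h/\Q)[3]$, and a nonzero element of $\Sha$ is by definition a torsor that is everywhere locally, but not globally, trivial. Either way the two cubics violate the Hasse principle, as claimed.
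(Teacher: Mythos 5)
Your proposal is correct and follows essentially the same route as the paper: identify the two cubics as the homogeneous spaces of Theorem \ref{thmhomspeq} for $t=\pm 1+\sqrt{2}$ computed on the minimal model with $\bar{A}=72$ and $\bar{B}=(h-2)^2(h-8)/3$ (exactly the model your rescaling bookkeeping would produce, with $t=u+v\sqrt{72}$, $v=1/6$, $s=-1$, and the equation cleared of denominators by multiplying by $6$), and then invoke Theorem \ref{thmshaEh} to conclude that these classes are nontrivial in $\Sha(E_h/\Q)[3]$, hence everywhere locally but not globally soluble. Your expanded justification of the two solubility assertions --- Selmer membership via $\Selpsi(E_h/\Q)=\Lstar$ for local points, and triviality of $\bar{E}_h(\Q)/\psi(E_h(\Q))$ together with the fundamental unit not being a cube for global insolubility --- is precisely the content the paper compresses into its citation of Theorem \ref{thmshaEh}.
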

\begin{proof}
The equation for $C_{1+\sqrt{2}}^h$ is an application of Theorem~\ref{thmhomspeq} with $t=1 +\sqrt{2}$. We use a minimal model of $\bar{E}_h$ with $\bar{A}=72$ and $\bar{B}=(h-2)^2(h-8)/3$ to reduce the size of the coefficients. Theorem~\ref{thmshaEh} asserts that these curves represent non-trivial elements of the Tate-Shafarevich group, meaning that they violate the Hasse principle.
\end{proof}

Observe that $h=19$ satisfies the conditions, hence the homogeneous space
\[
    C_{1+\sqrt{2}}^{19}\colon 3w^2z+2z^3+w^3+6wz^2+6358=12z^2-6w^2 
\]
violates the Hasse principle. By Magma \cite{BibMagma}, the values $h<120 000$ (besides $h=19$) that satisfy our conditions are:
\[3259,  5659, 15739, 21019, 55339, 67219, 69499, 79699, 88819, 99139, 116539, 119299.
\]

Not surprisingly, the set $\{E_h\}$ gives a sparser set of elements of order $3$ in a Tate-Shafarevich group than the list in \cite{BibFisherTab}. An advantage is that the examples coming from $\{E_h\}$ are easy to generate; one only needs the congruence $h \equiv 3 \bmod 8$ and the primality of $h$, $h-2$, $h-6$ and $h-8$. 

\section*{Acknowledgements}
It is our pleasure to thank Steffen M\"{u}ller and Damiano Testa for their interest in this work, and for valuable suggestions.

\bibliography{mainIntegersFormat}
\bibliographystyle{integers}

\end{document}